\newcommand{\F}{\mathbb{F}}
\newtheorem{theorem}{Theorem}
\theoremstyle{definition}
\newtheorem*{example}{Example}
\theoremstyle{remark}
\begin{document}
\title{A note on powers in finite fields}
\author{Andreas Aabrandt}
\author{Vagn Lundsgaard Hansen}
\address{Technical University of Denmark}
\maketitle

\vspace{-30pt}

\begin{abstract}
The study of solutions to polynomial equations over finite fields has a long history in mathematics and is an interesting area of contemporary research. In recent years the subject has found important applications in the modelling of problems from applied mathematical fields such as signal analysis, system theory, coding theory and cryptology. In this connection it is of interest to know criteria for the existence of squares and other powers in arbitrary finite fields. Making good use of polynomial division in polynomial rings over finite fields, we have examined a classical criterion of Euler for squares in odd prime fields, giving it a formulation which is apt for generalization to arbitrary finite fields and powers. Our proof uses algebra rather than classical number theory, which makes it convenient when presenting basic methods of applied algebra in the classroom. 
\end{abstract}

\subjclass{{\small \textbf{Subject class:} 11A15, 12C15}}

\keywords{{\small \textbf{Keywords:} Finite fields,  prime numbers, squares and powers in finite fields}}

\section*{}

\vspace{-25pt}

For any prime number $p$ we denote by $\F_p$ the field of residue classes of integers modulo $p$. And more generally, for any integer $n\ge 1$, we denote by $\F_q$ the finite field with $q=p^n$ elements. We identify $\F_p$ with the prime field of $\F_q$ by identifying $1\in\F_p$ with the identity element $e\in\F_q$. Denote by $\F_q^*$ the group of nonzero elements in $\F_q$.

For the prime fields $\F_p$, the following result was known to Euler; \cite{davenport}, \cite{IrelandRosen}.

\begin{theorem}[Euler's Criterion]
Let $p$ be an odd prime, and let $a\in \F_p^*$. Then there exists an element $x\in \F_p^*$ such that $a=x^2$ in $\F_p^*$ if and only if
$$
a^{(p-1)/2}\equiv 1 \pmod p.
$$
\end{theorem}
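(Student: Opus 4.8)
The plan is to treat the two implications separately, using in both only the elementary theory of polynomials over a field — in particular division with remainder and the factor theorem — rather than quadratic reciprocity or Gauss sums.

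For the forward implication, suppose $a=x^2$ with $x\in\F_p^*$. Since $\F_p^*$ is a group of order $p-1$, Lagrange's theorem (equivalently, Fermat's little theorem) gives $x^{p-1}=1$, whence $a^{(p-1)/2}=x^{p-1}=1$ in $\F_p$. This half uses no special feature of $p$ beyond $p-1$ being the order of $\F_p^*$.

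For the converse I would argue by a counting argument anchored on the factor theorem: a nonzero polynomial of degree $d$ over $\F_p$ has at most $d$ roots in $\F_p$, which is exactly what repeatedly dividing out linear factors $T-c$ yields. First count the squares. The squaring map $\sigma\colon\F_p^*\to\F_p^*$, $\sigma(x)=x^2$, is a group homomorphism whose kernel consists of the roots in $\F_p^*$ of $T^2-1=(T-1)(T+1)$. Because $p$ is odd, $1\ne -1$, so this kernel is $\{1,-1\}$, of order $2$, and therefore the image $\sigma(\F_p^*)$ — the set of nonzero squares — has exactly $(p-1)/2$ elements. By the forward implication already proved, every one of these $(p-1)/2$ squares is a root of $f(T)=T^{(p-1)/2}-1$. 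Since $\deg f=(p-1)/2$, the polynomial $f$ can have no further roots, so the roots of $f$ in $\F_p^*$ are precisely the nonzero squares. Consequently, if $a^{(p-1)/2}=1$ then $a$ is a root of $f$, hence a square, as desired.

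The main obstacle is not any single computation but arranging the bookkeeping so that the polynomial degree bound does the work reciprocity-type arguments usually do: one needs the number of squares to match $\deg f$ \emph{exactly}, and this is precisely where oddness of $p$ enters (if $p=2$ the kernel of $\sigma$ is trivial and every element is a square, so the statement degenerates). An equivalent route, perhaps cleaner to present, is to factor $T^{p-1}-1=(T^{(p-1)/2}-1)(T^{(p-1)/2}+1)$: all $p-1$ elements of $\F_p^*$ are roots of the left-hand side, each factor on the right has at most $(p-1)/2$ roots, so each has exactly $(p-1)/2$ roots, and one then identifies the roots of the first factor with the squares exactly as above. Either way the argument is self-contained given only that $\F_p^*$ is a finite abelian group and that polynomials over a field admit division with remainder, which is also what makes it generalize smoothly to $\F_q$ and to higher powers.
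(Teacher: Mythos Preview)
Your proof is correct, but it follows a different line from the paper's. The paper does not prove Theorem~1 in isolation; it establishes the generalized version (Theorem~2) by a single polynomial-division step: writing $x^{q}-x = h(x)(x^{2}-a) + (a^{(q-1)/2}-1)x$, and using that $\F_q$ is the splitting field of $x^q-x$ to argue that $a$ is a square if and only if $x^2-a$ divides $x^q-x$, which by the displayed remainder happens exactly when $a^{(q-1)/2}=1$. Your argument instead splits the two implications, handling the forward one by Fermat and the converse by the counting/pigeonhole argument that the $(p-1)/2$ squares already exhaust the roots of $T^{(p-1)/2}-1$. What the paper's route buys is a single uniform computation that handles both directions at once and recycles verbatim for $r^{\rm th}$ powers (their Theorem~4): one simply divides $x^q-x$ by $x^r-a$. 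What your route buys is that the count of squares (the paper's Theorem~3) drops out as a byproduct rather than requiring a separate argument, and the alternative factorization $T^{p-1}-1=(T^{(p-1)/2}-1)(T^{(p-1)/2}+1)$ you mention makes the non-square case equally explicit.
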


This criterion can be generalized to all finite fields as follows.

\begin{theorem}[Generalized Euler's Criterion]
Let $q=p^n$ for an odd prime $p$ and an arbitrary integer $n\ge 1$, and let $a\in\F_q^*$. Then there exists an element $x\in\F_q^*$ such that $a=x^2$ in $\F_q^*$  if and only if
$$
a^{(q-1)/2}=1.
$$
\end{theorem}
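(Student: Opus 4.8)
The plan is to reduce everything to two standard facts: that $\F_q^*$ is a group of order $q-1$, so that $b^{q-1}=e$ for every $b\in\F_q^*$ (Lagrange's theorem, the analogue for $\F_q$ of Fermat's little theorem), and that a nonzero polynomial of degree $d$ over the field $\F_q$ has at most $d$ roots in $\F_q$, a consequence of polynomial division.

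First I would pin down the number of nonzero squares. Consider the squaring homomorphism $\varphi\colon\F_q^*\to\F_q^*$ given by $\varphi(x)=x^2$. Its kernel is the set of roots in $\F_q$ of $X^2-e=(X-e)(X+e)$, which is $\{e,-e\}$; since $p$ is odd we have $e\ne-e$, so $\lvert\ker\varphi\rvert=2$, and therefore the image $S=\varphi(\F_q^*)$, the set of nonzero squares in $\F_q$, has exactly $(q-1)/2$ elements.

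Next I would dispatch the forward implication and, at the same time, set up the converse: if $a=x^2$ with $x\in\F_q^*$, then $a^{(q-1)/2}=x^{q-1}=e$. Since this holds for every $a\in S$, all $(q-1)/2$ elements of $S$ are roots of the polynomial $f(X)=X^{(q-1)/2}-e\in\F_q[X]$. For the converse, suppose $a\in\F_q^*$ satisfies $a^{(q-1)/2}=e$, i.e. $a$ is a root of $f$. Because $\deg f=(q-1)/2$, the polynomial $f$ has at most $(q-1)/2$ roots in $\F_q$; since $S$ already supplies that many, the roots of $f$ are precisely the elements of $S$, whence $a\in S$ and $a$ is a square. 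This settles the equivalence.

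The step I expect to carry the weight is the interplay between the count of squares and the degree bound: one must be sure that squaring really produces $(q-1)/2$ \emph{distinct} values (this is exactly where oddness of $p$ enters, via $e\ne-e$) and that no further element of $\F_q^*$ can be a root of $f$ (this is exactly where polynomial division over a field is used). Everything else is bookkeeping. If one prefers to avoid homomorphism language, the same count follows from the cyclicity of $\F_q^*$, but the argument above needs only Lagrange's theorem and the root bound, which keeps it close to the polynomial methods emphasised in the introduction.
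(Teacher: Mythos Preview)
Your argument is correct, but it follows a different route from the paper's. The paper works directly with the splitting-field description of $\F_q$: since $\F_q$ consists exactly of the roots of $f(x)=x^q-x$, the polynomial $g(x)=x^2-a$ has a root in $\F_q$ if and only if it divides $f(x)$; an explicit long division
\[
x^q-x = h(x)(x^2-a) + \bigl(a^{(q-1)/2}-1\bigr)x
\]
then makes the remainder visible and gives the criterion immediately. Your proof instead runs a counting argument: you first determine that there are exactly $(q-1)/2$ nonzero squares via the kernel of the squaring map, observe that all of them satisfy $X^{(q-1)/2}=e$, and then invoke the root bound to conclude that nothing else can. Each approach has its advantages. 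The paper's division argument never needs a separate count of the squares (that count appears later as an independent Theorem~\ref{thm_num_sq}) and generalizes verbatim to $r$th powers by dividing $x^q-x$ by $x^r-a$. Your argument is perhaps more conceptual and avoids writing down the quotient $h(x)$, but it front-loads the enumeration of squares; in effect you prove Theorem~\ref{thm_num_sq} first and deduce the criterion from it, the reverse of the paper's order. Both arguments ultimately rest on polynomial division over a field---yours through the root bound, the paper's through an explicit quotient-and-remainder identity.
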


\begin{proof}
The finite field $\F_q$ is uniquely determined up to an isomorphism as the splitting field for the polynomial $f(x)=x^q-x$  over $\F_p$, and consists of the $q$ roots of $f(x)$; see \cite{lang}.

From this description follows easily that there exists an element $x\in\F_q^*$ such that $a=x^2 \in \F_q^*$ if and only if the polynomial $g(x) = x^2 - a$ splits into linear factors and hence is a divisor in $f(x)=x^q-x$.

By polynomial division we get
$$
f(x)=x^q-x = h(x)(x^2-a)+ (a^{(q-1)/2}-1)x,
$$
where
$$
h(x) = x^{q-2}+ax^{q-4}+a^2x^{q-6}+\cdots +a^{(q-3)/2}x.
$$

From the above follows immediately that $g(x)$ is a divisor in $f(x)$ and hence that there exists an element $x\in\F_q^*$ such that $x^2= a$ if and only if 
$$
a^{(q-1)/2}=1 .
$$
\end{proof}

\begin{example}
For $p=3$, the series of fields $\F_{3^n}$ for $n\ge 1$, exhibits interesting phenomena for the existence of nontrivial squares, i.e. squares $\neq 0, 1$.

The finite field $\F_3$ contains no nontrivial squares as can easily be seen by direct computations, or, by using Euler's criterion.

The finite field $\F_{3^2}$ can be described as the polynomial ring $\F_3[t]$ modulo the irreducible polynomial $t^2+1$. The nine elements in $\F_{3^2}$ are then uniquely described by the nine polynomials $x=a_0+a_1t$, for $a_0,a_1\in \F_3$, and $t^2+1 = 0$. It follows that $t^2=-1=2$ and $(1+2t)^2=1+4t+4t^2=t$, showing that 2, t, and 2t  are the three nontrivial  squares in  $\F_{3^2}$. This is in accordance with the generalized Euler's criterion. 

Using the generalized Euler's criterion, it can be proved by induction that 2 is a square in the finite field $\F_{3^n}$ for an arbitrary integer $n\ge 1$ if and only if $n$ is even. The formula $2^{(3^n-1)/2}=(2^{(3^{n-2}-1)/2})^9$, which holds in $\F_3$, is useful for the induction step.

The fields $\F_2$ and $\F_3$ are the only prime fields without nontrivial squares.
\end{example}

The number of nontrivial squares in an arbitrary finite field is given by

\begin{theorem}\label{thm_num_sq}
 The finite field $\F_q$,  with $q=p^n$ for an odd prime $p$ and an integer $n\ge 1$, contains $$\frac{q-1}{2}-1$$ nontrivial squares.
\end{theorem}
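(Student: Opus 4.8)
The plan is to deduce the count directly from the Generalized Euler's Criterion just proved, combined with the elementary fact that the $q-1$ elements of $\F_q^*$ are exactly the roots of $x^{q-1}-1$. First I would note that, by the criterion, an element $a\in\F_q^*$ is a square precisely when $a$ is a root in $\F_q$ of
$$
p_1(x)=x^{(q-1)/2}-1\in\F_q[x].
$$
Since $\deg p_1=(q-1)/2$, there are at most $(q-1)/2$ squares in $\F_q^*$.

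To see that this bound is attained, I would use the factorization
$$
x^{q-1}-1=\bigl(x^{(q-1)/2}-1\bigr)\bigl(x^{(q-1)/2}+1\bigr)=p_1(x)\,p_2(x),
$$
valid because $q$ is odd. Every element of $\F_q^*$ is a root of $x^{q-1}-1$ — by Lagrange's theorem applied to the group $\F_q^*$, or equivalently from the splitting-field description of $\F_q$ used above — so the right-hand side has $q-1$ distinct roots in $\F_q$. The polynomials $p_1$ and $p_2$ have no common root, since $p_2(x)-p_1(x)=2$ is a nonzero constant in $\F_q$ (here again $p$ odd is essential); hence the $q-1$ roots are partitioned between them. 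As each factor has at most $(q-1)/2$ roots, each must have exactly $(q-1)/2$ roots, so $\F_q^*$ contains exactly $(q-1)/2$ squares.

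Finally, $1=e$ is a square while $0\notin\F_q^*$, so exactly one of these $(q-1)/2$ squares is trivial; subtracting it yields $\frac{q-1}{2}-1$ nontrivial squares, proving the theorem. The only delicate point is the counting step — confirming that $p_1$ genuinely meets its degree bound — and the ``conservation of roots'' argument between the coprime factors $p_1,p_2$ of $x^{q-1}-1$ settles it. An alternative, more group-theoretic route bypasses this entirely: squaring is a homomorphism $\F_q^*\to\F_q^*$ whose kernel consists of the roots of $x^2-1$, namely $\{e,-e\}$, a set of size $2$ since $p$ is odd, so its image has $(q-1)/2$ elements.
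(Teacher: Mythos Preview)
Your proof is correct. Both the main root-counting argument and the kernel-size alternative are valid; the only small quibble is cosmetic (you write $1=e$ where the paper simply identifies $1$ with $e$), and it does not affect the mathematics.

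The paper's own proof is different in emphasis: it invokes the cyclicity of $\F_q^*$, picks a generator $\gamma$, and observes that the squaring homomorphism sends $\gamma^l$ to $\gamma^{2l}$, so the squares are exactly the even powers of $\gamma$---hence half of $\F_q^*$. Your main argument instead feeds the Generalized Euler Criterion back into the counting problem: squares are precisely the roots of $p_1(x)=x^{(q-1)/2}-1$, and the coprime factorization $x^{q-1}-1=p_1\,p_2$ forces $p_1$ to realize its full root count $(q-1)/2$. This is a genuinely different route: it avoids the (nontrivial) structure theorem that $\F_q^*$ is cyclic, relying only on polynomial degree bounds and the criterion already established, which keeps the argument nicely self-contained within the paper's polynomial-division theme. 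The paper's route, on the other hand, yields more: an explicit description of \emph{which} elements are squares (the even powers of the generator), not just how many there are. Your closing alternative---the kernel $\{e,-e\}$ of the squaring map has size~$2$, so the image has size $(q-1)/2$---is the cleanest of the three and is essentially the paper's argument stripped of its reliance on cyclicity.
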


\begin{proof}
The multiplicative group $\F_{q}^*$ is a cyclic group of order $q-1$ generated by an element $\gamma\in\F_{q}^*$; see \cite{lang}. Every element in $\F_{q}^*$ has then a unique presentation as a power ${\gamma}^k$ of $\gamma$, where the exponent $k$ is counted modulo $q$.

The squaring homomorphism $x^2: \F_q^*\to \F_q^*$ maps the element $a={\gamma}^l\in\F_q^*$ into $c={\gamma}^{2l}\in\F_q^*$. From this we conclude that $c={\gamma}^k$ is a square in $\F_q^*$ if and only if $k$ is even modulo $q$. Hence there are equally many squares and non-squares in $\F_q^*$. Now the theorem follows immediately.
\end{proof}

It is not easy to find a square root of an element in the finite field $\F_{p^n}$ for large primes $p$ and integers $n\ge 1$.  Even using the generalized Euler's criterion it is fairly complicated just to establish that an element has a square root without the use of a computer.

\begin{example}
Consider the finite field $\F_{13^3}$.  Using the irreducible polynomial $t^3 + 2t + 11$ in the polynomial ring $\F_{13}[t]$, the field can be described as

$$
\F_{13^3} = \F_{13}[t]/(t^3 + 2t + 11).
$$

By the generalized Euler's criterion we have
$$
(5+t+8t^2)^{(13^3-1)/2} = (5+t+8t^2)^{1098} = 1,
$$
showing that  $5+t+8t^2$ is a square. And indeed, 
$$
(7+t+2t^2)^2=5+t+8t^2.
$$
\end{example}

\vspace{5pt}
The results on squares in a finite field $\F_q$ can be extended to higher powers. This leads to 
our main result in Theorem 4. It should be mentioned that Theorem 4 can be derived from the similar Proposition 7.1.2 in  \cite{IrelandRosen}. Our proof follows, however, different lines 
and uses algebra rather than classical number theory, which in our opinion makes the proof more transparent.

Let $r$ be a positive integer in the interval $2\le r<q-1$. If $r$ is not a divisor in the order $q-1$  of the multiplicative group $\F_{q}^*$, then the $r^{\rm th}$ power homomorphism $x^r: \F_q^*\to \F_q^*$ is injective and hence an isomorphism in  $\F_{q}^*$. If $r$ is not a divisor in $q-1$, every element in $\F_{q}^*$ is therefore an  $r^{\rm th}$ power.

In case $r$ is a divisor in $q-1$ we have

\begin{theorem}[Extended Euler's Criterion]
Let $q=p^n$ for an odd prime $p$ and an arbitrary integer $n\ge 1$. Suppose the number $r$ is a proper divisor in $q-1$. Then it holds that
\begin{enumerate}
\item  An element $a\in\F_q^*$ is an $r^{\rm th}$ power in $\F_q^*$ if and only if $a^{(q-1)/r}=1.$
\item There are exactly $\frac{q-1}{r}$  different $r^{\rm th}$ powers in $\F_q^*.$
\end{enumerate}
\end{theorem}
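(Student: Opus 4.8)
The plan is to follow the same strategy as in the proof of the Generalized Euler's Criterion, replacing the quadratic polynomial $x^2-a$ by $x^r-a$ and keeping careful track of the role played by the hypothesis that $r$ divides $q-1$. First I would record the elementary but essential observation that, since $\F_q^*$ is cyclic of order $q-1$ and $r\mid q-1$, the group $\F_q^*$ contains exactly $r$ distinct $r^{\rm th}$ roots of unity. Consequently, if $a=b^r$ for some $b\in\F_q^*$, then every root of $g(x)=x^r-a$ has the form $\zeta b$ with $\zeta^r=1$, so all $r$ roots lie in $\F_q$; conversely, if $g$ has a single root in $\F_q$ then $a$ is an $r^{\rm th}$ power. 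Since $r$ divides $p^n-1$ and hence $p\nmid r$, the polynomial $g$ is separable, and therefore $a$ is an $r^{\rm th}$ power in $\F_q^*$ if and only if $g(x)=x^r-a$ divides $f(x)=x^q-x$ in $\F_q[x]$ — exactly the reformulation used for squares.

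Next I would carry out the polynomial division of $f$ by $g$. Writing $q-1=rm$ with $m=(q-1)/r$ and using the factorization $x^{rm}-a^m=(x^r-a)\bigl(x^{r(m-1)}+ax^{r(m-2)}+\cdots+a^{m-1}\bigr)$, one obtains
$$
f(x)=x^q-x = h(x)(x^r-a) + \bigl(a^{(q-1)/r}-1\bigr)x,
$$
with $h(x)=x\bigl(x^{r(m-1)}+ax^{r(m-2)}+\cdots+a^{m-1}\bigr)$. Because the remainder $\bigl(a^{(q-1)/r}-1\bigr)x$ has degree at most $1<r$, it is the genuine remainder of the division, so $g$ divides $f$ precisely when $a^{(q-1)/r}-1=0$. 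Together with the reformulation above this proves part (1).

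For part (2), I would argue as in the proof of Theorem~\ref{thm_num_sq}: the $r^{\rm th}$ power map $x\mapsto x^r$ is an endomorphism of the cyclic group $\F_q^*$ whose kernel is the set of $r^{\rm th}$ roots of unity, of size $r$, so its image — the set of $r^{\rm th}$ powers — has size $(q-1)/r$. Alternatively, and in keeping with the polynomial point of view, part (1) identifies the $r^{\rm th}$ powers with the roots in $\F_q^*$ of $y^{(q-1)/r}-1$, a polynomial of degree $(q-1)/r$ that divides $y^{q-1}-1$ and hence splits completely over $\F_q$, giving exactly $(q-1)/r$ distinct roots.

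The step I expect to require the most care is the first one: in the square case the paper could simply say that $x^2-a$ ``splits into linear factors'' as soon as it has one root, since the two roots are negatives of each other; for general $r$ this automatic splitting is exactly where the hypothesis $r\mid q-1$ enters, through the presence of all $r^{\rm th}$ roots of unity in $\F_q^*$, and it is worth stating explicitly. The division itself and the counting in part (2) are then routine.
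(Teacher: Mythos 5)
Your proposal is correct and follows essentially the same route as the paper: reformulate ``$a$ is an $r^{\rm th}$ power'' as ``$x^r-a$ divides $x^q-x$,'' establish the division identity $x^q-x=h(x)(x^r-a)+(a^{(q-1)/r}-1)x$, and count the $r^{\rm th}$ powers using the cyclic structure of $\F_q^*$. Your explicit justification that $x^r-a$ splits completely once it has one root (via the $r$ distinct $r^{\rm th}$ roots of unity supplied by $r\mid q-1$, and separability from $p\nmid r$) is a point the paper passes over in silence, and is a welcome precision rather than a departure.
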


\begin{proof}
We exploit again that $\F_q$ is the splitting field for the polynomial $f(x)=x^q-x$ in $\F_p$, and is generated by the $q$ roots of $f(x)$; see \cite{lang}. 

The multiplicative group $\F_q^*$ is a cyclic group of order $q-1$. Choose a generator $\gamma \in \F_q^*$. Then the powers ${\gamma}^i$, $i=1,\dots, q-1$, runs through all the elements in $\F_q^*$. 

Let  $r$ be a proper divisor in $q-1$ and put $k=\frac {q-1}{r}$.

The $r^{\rm th}$ power homomorphism $x^r: \F_q^*\to \F_q^*$ maps ${\gamma}^i$ into ${\gamma}^{ir}$, showing that the $k$ elements $a_j={\gamma}^{jr}$, $j=1,\dots, k$, are exactly the $r^{\rm th}$ powers in $\F_q^*$. This proves part (2) of the theorem.

For $a\in\F_q^*$, consider the polynomial $g(x) = x^r - a$ in $\F_q$. 

By polynomial division in $\F_q[x]$ we get
$$
f(x)=x^q-x = h(x)(x^r-a)+ (a^{(q-1)/r}-1)x,
$$
where
$$
h(x) = x^{q-r}+ax^{q-2r}+a^2x^{q-3r}+\cdots +a^{(q-3)/r}x.
$$
It follows that $g(x)$ is a divisor in $f(x)$ if and only if $a^{(q-1)/r}=1.$

Since 
$$
f(x)=x^q-x=x(x- {\gamma}^1)(x- {\gamma}^2)\dots ( x-{\gamma}^{q-1}),
$$
it is clear that $g(x)= x^r - a$ is a divisor in $f(x)$ if and only if it splits completely into linear factors, or in other words, if and only if $a\in\F_q^*$ is the  $r^{\rm th}$ power of $r$ roots ${\gamma}^{i_1},\dots, {\gamma}^{i_r}\in\F_q^*$ of $f(x)$.

Combining information we conclude that $a\in \F_q^*$ is an $r^{\rm th}$ power in $\F_q^*$ if and only if $a^{(q-1)/r}=1$, thus proving part (1) of the theorem.
\end{proof}

\vspace{5pt}

\begin{example}
The finite field $\F_{5^2}$ can be described as the polynomial ring $\F_5[t]$ modulo the irreducible polynomial $t^2+t+1$, i.e.  $\F_{5^2} = \F_{5}[t]/(t^2 + t + 1).$

For each of the proper divisors $r = 2, 3, 4, 6, 8, 12$ in 24, which is the order of the multiplicative group $\F_{5^2}^*$,  we can using the extended Euler's criterion by hand determine which elements in the prime field $\F_5$ are $r^{\rm th}$ powers in $\F_{5^2}^*$.\\[-10pt]

\underline{$r=2, 3, 6$} 

All elements in  $\F_{5}^*$ are $r^{\rm th}$ powers in $\F_{5^2}^*$. 

In fact: \ $2=(1+2t)^2=3^3=(3+t)^6$. 

\qquad\qquad $3=(3+t)^2=2^3=(1+2t)^6$. 

\qquad\qquad $4=2^2=4^3=2^6$.\\[-10pt]

\underline{$r=4$} 

The elements $1,4$ are 4-powers in $\F_{5^2}^*$,  and $2,3$ are not 4-powers in $\F_{5^2}^*$.

In fact: \ $4=(1+2t)^4$.\\[-10pt]

\underline{$r=8$} 

The elements $2,3,4$ are not 8-powers in $\F_{5^2}^*$.\\[-10pt]

\underline{$r=12$} 

The elements $1, 4$ are 12-powers in $\F_{5^2}^*$, and $2, 3$ are not 12-powers in $\F_{5^2}^*$.

In fact: \ $4=(1+2t)^{12}$. 
\end{example}

\begin{example}
As a more complicated example, consider again the finite field $\F_{13^3} = \F_{13}[t]/(t^3 + 2t + 11)$.

Making use of the extended Euler's criterion we investigate the existence of  $r^{\rm th}$ powers in $\F_{13^3}^*$ for $r=12, 61, 1098$.

The computation
$$
(5+7t)^{(13^3-1)/12}=1,
$$
shows that $5+7t$ is a power of degree 12. In fact: $(t^2)^{12}=5+7t.$

The computation
$$
(5+3t+7t^2)^{(13^3-1)/61} = (5+3t+7t^2)^{36} = 1,
$$
shows that  $5+3t+7t^2$ is a power of degree 61. In fact: 
$$
(6+2t)^{61}=5+3t+7t^2.
$$

The computation
$$
12^{(13^3-1)/1098} = 12^{2} = 1,
$$
shows that  $12$ is a power of degree 1098. In fact: $t^{1098}=12$. 
\end{example}

\bibliographystyle{plain}

\end{document}